\newtheorem{Theorem}{Theorem}
\newtheorem{Lemma}[Theorem]{Lemma}
\newtheorem{Proposition}[Theorem]{Proposition}
\newtheorem{Corollary}[Theorem]{Corollary}
\newtheorem{Conjecture}[Theorem]{Conjecture}
\theoremstyle{definition}
\newtheorem{Definition}[Theorem]{Definition}
\newtheorem{Example}[Theorem]{Example}
\newtheorem{Remark}[Theorem]{Remark}
\newtheorem{Claim}{Claim}
\providecommand{\ch}[1]{\text{\raise 2pt \hbox{$\chi$}\kern-0.2pt}_{#1}}
\def\XXint#1#2#3{{\setbox0=\hbox{$#1{#2#3}{\int}$}
      \vcenter{\hbox{$#2#3$}}\kern-.5\wd0}}
\theoremstyle{plain}
\newtheorem{Theorem}{Theorem}[section]
\newtheorem{Lemma}[Theorem]{Lemma}
\newtheorem{Corollary}[Theorem]{Corollary}
\newtheorem{Proposition}[Theorem]{Proposition}
\newtheorem{Conjecture}[Theorem]{Conjecture}
\theoremstyle{Definition}
\newtheorem{Definition}[Theorem]{Definition}
\newtheorem{Example}[Theorem]{Example}
\theoremstyle{Remark}
\newtheorem{Remark}[Theorem]{Remark}
\theoremstyle{plain}
\newtheorem{Theoreme}{Th√É¬àor√É¬ãme}[chapter)

\newtheorem{Proposition}[Theoreme]{Proposition}
\theoremstyle{Definition}
\theoremstyle{Remark}
\newcommand{\N}{\mathbb{N}}     %natural numbers
\newcommand{\R}{\mathbb{R}}     %real numbers
\newcommand{\Z}{\mathbb{Z}}         %integers
\newcommand{\calB}{\mathscr{B}}
\newcommand{\calQ}{\mathscr{Q}}
\newcommand{\bsigma}{\boldsymbol{\sigma}}
\newcommand{\btheta}{\boldsymbol{\theta}}
\DeclareMathOperator{\diam}{diam}			%diameter
\renewcommand{\geq}{\geqslant}
\renewcommand{\leq}{\leqslant}
\renewcommand{\epsilon}{\varepsilon}
\definecolor{qqwwtt}{rgb}{0.,0.4,0.2}
\definecolor{ffqqqq}{rgb}{1.,0.,0.}
\definecolor{qqzzff}{rgb}{0.,0.6,1.}
\definecolor{qqzzqq}{rgb}{0.,0.6,0.}
\definecolor{ccqqqq}{rgb}{0.8,0.,0.}
\definecolor{ttttff}{rgb}{0.2,0.2,1.}
\begin{document}
\author{Emma D'Aniello and Laurent Moonens}
\date{\today}
\thanks{Laurent Moonens acknowledges the partial support of the ``Luigi Vanvitelli'' University of Campania, through a scholarship awarded to him as a ``Visiting Professor''.}
\subjclass[2010]{Primary: 42B25, 28B05; Secondary: 28D05.}
\keywords{Maximal functions; differentiation bases.}

\title[Differentiation along rectangles]{Differentiating along rectangles\\
with fixed shapes in a set of directions}

\maketitle

\begin{abstract}
In the present note, we examine the behavior of some homo\-thecy-invariant differentiation basis of rectangles in the plane satisfying the following requirement: for a given rectangle to belong to the basis, the ratio of the largest of its side-lengths by the smallest one (which one calls its \emph{shape}) has to be a fixed real number depending on the angle between its longest side and the horizontal line (yielding a \emph{shape-function}). Depending on the allowed angles and the corresponding shape-function, a basis may differentiate various Orlicz spaces. We here give some examples of shape-functions so that the corresponding basis differentiates $L\log L(\R^2)$, and show that in some ``model'' situations, a fast-growing shape function (whose speed of growth depends on $\alpha>0$) does not allow the differentiation of $L\log^\alpha L(\R^2)$.
\end{abstract}

A (Buseman-Feller) \emph{differentiation basis} in the plane is a collection $\calB$ of open sets such that for every $x\in\R^2$ one has $\inf\{\diam B : B\in\calB, B\ni x\}=0$. In the sequel, we shall always assume that $\calB$ is \emph{homothecy-invariant}, meaning that one has $x+\lambda B\in \calB$ for all $x\in \R^2$ and all $\lambda>0$.

It can arise, given a locally integrable function $f\in L^1_{\mathrm{loc}}(\R^2)$, that Lebesgue's differentiation theorem's conclusion holds for $f$ using sets in $\calB$ instead of the usual Euclidean balls, namely that one has, for a.e. $x\in\R^2$:
$$
f(x)=\lim_{\begin{subarray}{c} x\in R\in \calB\\\diam R\to 0\end{subarray}} \frac{1}{|B|} \int_B f.
$$
In case the latter holds, we shall say that $\calB$ \emph{differentiates} $\int f$. If $\calB$ differentiates $\int f$ for all $f\in X\subseteq L^1_{\mathrm{loc}}(\R^2)$, we shall than say, for simplicity, that $\calB$ \emph{differentiates $X$}. If a basis differentiates $L^\infty(\R^2)$, we call it a \emph{density basis}.

It is well-known (see \emph{e.g.} De Guzmán's book \cite{DEGUZMAN1975}) that, for $\calB$ a homothecy-invariant basis as above, the two following properties are equivalent for a given Young function $\Phi:\R_+\to\R_+$ (one ofter refers to this as the \emph{Sawyer-Stein} principle for differentiation bases)
\begin{itemize}
\item[(i)] $\calB$ differentiates the Orlicz space $L^\Phi(\R^2)$;
\item[(ii)] the maximal operator $M_{\calB}$ defined by $M_{\calB} f(x):=\sup_{x\in B\in\calB} \frac{1}{|B|} \int_B |f|$ satisfies the following estimate for all measurable $f$ and all $\lambda>0$:
\begin{equation}\label{eq.maxphi}
|\{x\in\R^2: M_{\calB} f(x)>\lambda\}|\leq C\int_{\R^2} \Phi\left(\frac{|f|}{\lambda}\right).
\end{equation}
\end{itemize}

When $\calB$ is a homothecy-invariant collection of \emph{rectangles}, various situations can occur; here are some famous examples:
\begin{itemize}
\item if $\calB$ is the collection of all rectangles parallel to the axes, then $L\log L(\R^2)$ is the largest Orlicz space that $\calB$ differentiates (see Stokolos \cite{STOKOLOS1988});
\item if $\calB$ is the collection of all rectangles (parallel to the axes or not), there always exists $f\in L^\infty(\R^2)$ such that $\int f$ is \emph{not} differentiated by $\calB$ (see Buseman and Feller \cite{BUSEMANNFELLER})~---~actually the same conclusion holds even if one replaces the collection of all rectangles by the collection of rectangles one side of which makes an angle with the horizontal line belonging to \emph{some} countable sets, like \emph{e.g.} any set $\btheta \subseteq [0,2\pi)$ that is dense in some interval (as it follows from \cite{BUSEMANNFELLER}) or even $\btheta=\{\frac 1n:n\in\N^*\}$ (see De Guzmán \cite{DEGUZMAN1981});
\item if, though, $\calB$ is the set of all rectangles one side of which makes an angle with the horizontal line belonging to the set $\btheta=\{2^{-k}:k\in\N\}$ (or to the image of any lacunary sequence as defined in \cite{MOONENS2016}), then $\calB$ is known to differentiate $L^p(\R^2)$ for all $1<p\leq \infty$ (see Córdoba and Fefferman \cite{CF1977} for $p\geq 2$ and Nagel, Stein and Wainger \cite{NSW1978} for all $p>1$);
\item it actually follows from a beautiful paper by Bateman \cite{BATEMAN} that if $\btheta\subseteq [0,2\pi)$ is a given set of angles, then the basis $\calB_{\btheta}$ of all rectangles one side of which makes an angle $\theta\in\btheta$ with the horizontal line, either differentiates all $L^p(\R^2)$ for all $1<p<\infty$, or fails to differentiate any $L^p(\R^n)$ for $1<p<\infty$ (dichotomy which, as we observed with J.M.~Rosenblatt in \cite{DMR}, remains true if one replaces the range $1<p<\infty$ by $1<p\leq \infty$);
\item it also follows from \cite{BATEMAN} that $\calB_{\btheta}$ never differentiates any $L^p(\R^n)$ for $1<p<\infty$ (and hence never is a density basis according to \cite{DMR}) if $\btheta$ is uncountable.
\end{itemize}

Defining the \emph{shape} $\sigma(R)$ of a rectangle $R$ as the quotient of its longest side-length by its shortest side-length, positive or negative differentiation results can also depend on restrictions made on this ratio. Let us mention, for example, a few situations where this influence is well understood:
\begin{itemize}
\item if $\calB$ is a homothecy-invariant basis of rectangles whose shapes are bounded from above, then $\calB$ differentiates $L^1(\R^2)$ (this easily follows from the fact that the maximal operator $M_{\calB}$ then behaves distributionally like the uncentered Hardy-Littlewood maximal operator on balls);
\item if for any $n\in\N^*$, $C_n$ denotes the set of all ternary numbers of the form $\sum_{j=1}^n a_j3^{-j}$ for some $a_j\in\{0,2\}$, $1\leq j\leq n$ (which one can see as a ``truncated ternary Cantor set''), then the basis $\calB$ of all rectangles $R$ such that, for some $n\in\N^*$, $R$ has shape $\frac 1n$ and has its longest side making an angle $\theta$ satisfying $\tan\theta\in C_n$, fails to differentiate $L^p(\R^2)$ for all $1\leq p\leq 2$, as it follows from Katz \cite{KATZ1996}.
\end{itemize}
In this short note, we focus our attention on the following question: given an infinite set of angles $\btheta\subseteq [0,2\pi)$ such that $0$ is a limit point of $\btheta$, does there exist a function $\bsigma:\btheta\to [1,+\infty)$ satisfying $\sup\bsigma=+\infty$, for which the basis $\calB^{\bsigma}$ differentiates (or fails to differentiates) a given Orlicz space, where $\calB^{\bsigma}$ denotes the basis of all rectangles for which there exists $\theta\in\btheta$ so that the longest side of $R$ makes an angle $\theta$ with the horizontal line, and that $\sigma(R)=\bsigma(\theta)$? We here examinate the following situations:
\begin{itemize}
\item under the above assumptions, the basis $\calB^{\bsigma}$ \emph{never} differentiates $L^1(\R^2)$ as it follows from a result by Moriyón (see Proposition~\ref{prop.1} below);
\item given $\epsilon_0>0$ small enough and any set $\btheta\subseteq [0,\epsilon_0]$, there always exists a (nonincreasing) function $\bsigma$ satisfying the above conditions and for which $\calB^{\bsigma}$ differentiates \emph{exactly} $L\log L(\R^2)$ (see Corollary~\ref{cor}, which basically follows from a simple geometric observation we describe in section~\ref{sec.2}, and from a result by Stokolos \cite{STOKOLOS1988});
\item if $\btheta$ is obtained from the ``model'' geometrical sequence $(2^{-k})_{k\in\N}$ by inserting uniformly $N_k$ angles in between $2^{-k-1}$ and $2^{-k}$, and if $\bsigma$ is constant on each of those ``blocks'', then, depending on how $(N_k)$, $(\sigma_k)$ and the Young function $\Phi$ behave with respect to each other, it may happen that the associated basis $\calB^{\bsigma}$ fails to differentiate the Orlicz space $L^\Phi(\R^2)$ (recall that it is still an open problem whether $\calB_{\theta_0}$ does or does not differentiate $L\log^l L(\R^2)$ for $l\geq 1$); here of course $(2^{-k})_{k\in\N}$ could be replaced by any lacunary sequence in the sense of \cite{MOONENS2016}.
\end{itemize}

\section{Using a result by R.~Moriyón}

We keep the notations defined in the introduction.
\begin{Proposition}\label{prop.1}
Assume that $\btheta\subseteq [0,2\pi)$ and $\bsigma: \btheta\to [1,+\infty)$ satisfy $\sup\bsigma=+\infty$. In this case, the homothecy-invariant basis $\calB^{\bsigma}$ fails to differentiate $L^1(\R^2)$.
\end{Proposition}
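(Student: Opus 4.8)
The plan is to combine the Sawyer--Stein equivalence recalled above with a theorem of Moriy\'on. Taking the Young function $\Phi(t)=t$, for which $L^{\Phi}(\R^2)=L^1(\R^2)$, the equivalence (i)$\Leftrightarrow$(ii) shows that $\calB^{\bsigma}$ differentiates $L^1(\R^2)$ if and only if $M_{\calB^{\bsigma}}$ satisfies \eqref{eq.maxphi} with this $\Phi$, that is, if and only if $M_{\calB^{\bsigma}}$ is of weak type $(1,1)$. Hence it suffices to show that $M_{\calB^{\bsigma}}$ is \emph{not} of weak type $(1,1)$. This is exactly what Moriy\'on's result provides: a homothecy-invariant basis of rectangles whose shapes are unbounded cannot be of weak type $(1,1)$. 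The only thing to verify is the hypothesis, and it is immediate: every $R\in\calB^{\bsigma}$ has $\sigma(R)=\bsigma(\theta)$ for some $\theta\in\btheta$, so that $\sup_{R\in\calB^{\bsigma}}\sigma(R)=\sup\bsigma=+\infty$. The Proposition follows at once; observe that the assumption that $0$ be a limit point of $\btheta$, used elsewhere in the note, plays no role here.

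It is instructive to recall the geometric mechanism behind this failure, since it is what one would verify if reproving the statement. Fix $\rho>0$ small, put $f=\chi_{B(0,\rho)}$ (so $\|f\|_{L^1}=\pi\rho^2$), and choose finitely many angles $\theta_1,\dots,\theta_N\in\btheta$ with shapes $\sigma_j:=\bsigma(\theta_j)$ all large. Because $\calB^{\bsigma}$ contains, for each $j$, every rectangle of shape exactly $\sigma_j$ making angle $\theta_j$ (at all scales, by homothecy invariance), a point $x$ lying in a thin rectangle through the origin in direction $\theta_j$, of length $\asymp|x|$ and width $\asymp|x|/\sigma_j$ and still containing $B(0,\rho)$, satisfies
\[
M_{\calB^{\bsigma}}f(x)\ \gtrsim\ \frac{|B(0,\rho)|}{|x|^2/\sigma_j}\ \asymp\ \frac{\rho^2\sigma_j}{|x|^2}.
\]
Choosing $\lambda$ a small multiple of $1/\max_j\sigma_j$, the superlevel set $\{M_{\calB^{\bsigma}}f>\lambda\}$ therefore contains, for each $j$, a ``beam'' $E_j$ --- a thin rectangle through the origin in direction $\theta_j$, of length $\asymp\rho\sqrt{\sigma_j/\lambda}$ and angular width $\asymp 1/\sigma_j$ --- whose area is $|E_j|\asymp\rho^2/\lambda$, independently of $j$.

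Two regimes now force weak type $(1,1)$ to fail, and reconciling them uniformly is the main obstacle (and the technical core of Moriy\'on's theorem). If the $\theta_j$ can be taken angularly well separated relative to the widths $1/\sigma_j$, the beams $E_j$ lie in disjoint sectors, whence $|\{M_{\calB^{\bsigma}}f>\lambda\}|\gtrsim N\rho^2/\lambda$ while $\|f\|_{L^1}/\lambda\asymp\rho^2/\lambda$, so the ratio is $\asymp N\to+\infty$. If instead the directions cluster so tightly that their angular widths overlap, then the thin rectangles of unbounded shape reach a whole angular continuum, and a computation analogous to the one for the strong maximal function yields $|\{M_{\calB^{\bsigma}}f>\lambda\}|\gtrsim\frac{\rho^2}{\lambda}\log\frac1\lambda$, again violating \eqref{eq.maxphi} with $\Phi(t)=t$. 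Carrying out this dichotomy --- extracting well-separated directions or exploiting the clustering, and in either case controlling the overlaps of the $E_j$ --- starting only from the single assumption $\sup\bsigma=+\infty$, is precisely the step I would import from Moriy\'on rather than reprove.
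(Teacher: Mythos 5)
Your proof is correct and takes essentially the same route as the paper: both reduce the statement to Moriy\'on's theorem (cited from De Guzm\'an's book, Appendix III) and observe that $\sup\bsigma=+\infty$ is exactly what triggers its hypothesis. The only difference is cosmetic~---~the paper verifies Moriy\'on's condition (e) explicitly, showing that the set $K=\cup\{R\in\calB^{\bsigma}:R\ni 0,\ |R|\leq 1\}$ is unbounded because the area-one rectangle of shape $\bsigma(\theta_k)$ centred at the origin has longest side $\sqrt{\bsigma(\theta_k)}\to\infty$, whereas you quote the theorem directly in its ``unbounded shapes imply failure of weak type $(1,1)$'' form and leave that one-line verification implicit; your heuristic second part is not needed for the argument.
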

\begin{proof}
Define, as in Moriyón's theorem (cited in \cite[Appendix III, p.~206]{DEGUZMAN1975}), the set:
$$
K:=\cup\{R\in \calB^{\bsigma}: R\ni 0, |R|\leq 1\}.
$$
Using the fact that $\sup\bsigma=+\infty$, choose $(\theta_k)\subseteq [0,2\pi)$ for which $\bsigma(\theta_k)\to+\infty$. Define then $R_k$ as the rectangle with area $1$ centered at the origin, having shape $\bsigma(\theta_k)$ and its longest side making an angle $\theta_k$ with the horizontal line, so that one has $R_k\in\calB^{\bsigma}$, $0\in R_k$, $|R_k|=1$ and hence also $R_k\subseteq K$. The length of its longest side being equal to $\sqrt{\bsigma(\theta_k)}$, which can be arbitrary large, it is clear that $K$ is unbounded. It hence follows from condition e) in Moriyón's theorem (cited in \cite[Appendix III, p.~206]{DEGUZMAN1975}) that $\calB^{\bsigma}$ fails to differentiate $L^1(\R^2)$.
\end{proof}

\section{From a simple geometrical observation to bases differentiating $L\log L(\R^2)$}\label{sec.2}

\subsection{A simple geometrical observation} Fix a rectangle $R$ with longest side $L$ and shortest side $\ell$, such that its longest side makes an angle $\theta$ with the origin. Fix then a parameter $t\in ]0,\frac 12 [$ and denote by $\check{R}$ the rectangle parallel to the axes contained inside $R$ and determined by the fact that two opposite vertices meet the pair of longest sides of $R$ at points distant of $tL$ from the nearest vertex of $R$ lying on the same side (see Figure~\ref{fig.1} below). Denote by $\check{L}$ and $\check{\ell}$ the horizontal and vertical sides of $\check{R}$. On the other hand, denote by $\hat{R}$ the smallest rectangle parallel to the axes containing $R$, and call $\hat{L}$ and $\hat{\ell}$ its horizontal and vertical sides, respectively.

\begin{figure}[h!]
\begin{center}
\begin{tikzpicture}[line cap=round,line join=round,>=triangle 45,x=1.1cm,y=1.35cm]
\clip(-1.28066891160634,-2.10094409662089) rectangle (10.544084683564193,3.6775675659247553);
\fill[line width=2.8pt,color=qqzzff,fill=qqzzff,fill opacity=0.10000000149011612] (0.,0.) -- (8.955037487502233,0.8985007498214519) -- (8.855204070855406,1.8935049150994776) -- (-0.09983341664682799,0.9950041652780258) -- cycle;
\fill[line width=2.pt,color=ffqqqq,fill=ffqqqq,fill opacity=0.10000000149011612] (2.1389259552287303,1.2196293527333888) -- (6.716278115626675,1.2196293527333888) -- (6.716278115626675,0.6738755623660889) -- (2.1389259552287303,0.6738755623660889) -- cycle;
%\fill[line width=2.pt,color=qqwwtt] (-0.09983341664682799,0.) -- (8.955037487502233,0.) -- (8.955037487502233,1.8935049150994776) -- (-0.09983341664682799,1.8935049150994776) -- cycle;
\draw [line width=2.pt,color=ffqqqq] (2.1389259552287303,1.2196293527333888)-- (6.716278115626675,1.2196293527333888);
\draw [line width=2.pt,color=ffqqqq] (6.716278115626675,1.2196293527333888)-- (6.716278115626675,0.6738755623660889);
\draw [line width=2.pt,color=ffqqqq] (6.716278115626675,0.6738755623660889)-- (2.1389259552287303,0.6738755623660889);
\draw [line width=2.pt,color=ffqqqq] (2.1389259552287303,0.6738755623660889)-- (2.1389259552287303,1.2196293527333888);
\draw [line width=2.pt,color=qqwwtt] (-0.09983341664682799,0.)-- (8.955037487502233,0.);
\draw [line width=2.pt,color=qqwwtt] (8.955037487502233,0.)-- (8.955037487502233,1.8935049150994776);
\draw [line width=2.pt,color=qqwwtt] (8.955037487502233,1.8935049150994776)-- (-0.09983341664682799,1.8935049150994776);
\draw [line width=2.pt,color=qqwwtt] (-0.09983341664682799,1.8935049150994776)-- (-0.09983341664682799,0.);
\draw [line width=2.8pt,dash pattern=on 1pt off 1pt] (-0.09983341664682799,0.9950041652780258)-- (2.1389259552287303,1.2196293527333888);
\draw [line width=2.pt,color=qqzzff] (-0.09983341664682799,0.9950041652780258)-- (0.,0.);
\draw [line width=2.pt,color=qqzzff] (0.,0.)-- (8.955037487502233,0.8985007498214519);
\draw [line width=2.pt,color=qqzzff] (-0.09983341664682799,0.9950041652780258)-- (8.855204070855406,1.8935049150994776);
\draw [line width=2.pt,color=qqzzff] (8.855204070855406,1.8935049150994776)-- (8.955037487502233,0.8985007498214519);
\begin{scriptsize}
\draw[color=qqzzff] (4.163179913340095,1.5413031192500621) node {$R$};
\draw[color=ffqqqq] (5.635696398776425,0.9277545836515863) node {$\check{R}$};
\draw[color=ffqqqq] (4.609397030138982,0.7938894486119188) node {$\check{L}$};
\draw[color=ffqqqq] (2.5233320091041804,0.916599155731614) node {$\check{\ell}$};
\draw[color=qqwwtt] (5.323344417017203,-0.29934248754536547) node {$\hat{L}$};
\draw[color=qqwwtt] (9.30583218444728,0.40344947141288867) node {$\hat{\ell}$};
\draw[color=qqwwtt] (3.7950507919810117,2.154851654848538) node {$\hat{R}$};
\draw[color=black] (0.9838829561480169,1.3181945608506165) node {$tL$};
\draw[color=qqzzff] (0.12491500631015737,0.5484700343725285) node {$\ell$};
\draw[color=qqzzff] (4.508998178859232,0.23611805261330437) node {$L$};
\end{scriptsize}
\end{tikzpicture}
\caption{The rectangles $R$, $\check{R}$ and $\hat{R}$}\label{fig.1}
\end{center}
\end{figure}
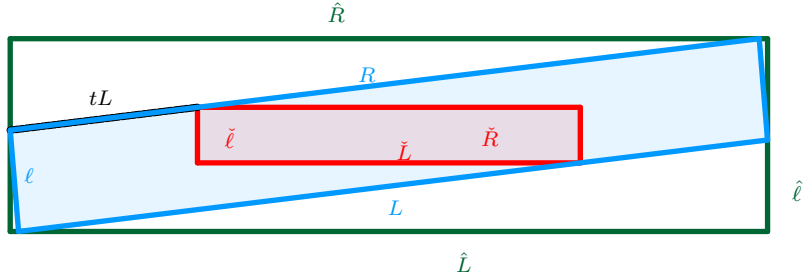

Simple trigonometric computations yield:
$$
\check{L}=(1-2t)L\cos\theta+\ell\sin\theta,
$$
$$
\check{\ell}= (1-2t)L\sin\theta+\ell\cos\theta,
$$
$$
\hat{L}=L\cos\theta+\ell\sin\theta,
$$
and:
$$
\hat{\ell}=\ell\cos\theta+L\sin\theta.
$$

Letting $\sigma:=L/\ell$ denote the {shape} of $R$, it's now a routine computation to calculate:
$$
\hat{A}_t(\theta,\sigma):=|\hat{R}|=\left[1+\frac 12 \left(\sigma+\frac{1}{\sigma}\right)\sin 2\theta\right]|R|,
$$
and:
$$
\check{A}_t(\theta,\sigma):=|\check{R}|=\left\{(1-2t)\cos 2\theta+\frac 12 \left[ \frac{1}{\sigma}-(1-2t)^2\sigma\right]\sin 2\theta\right\} |R|.
$$
Define now the ratio:
$$
\rho_t(\theta,\sigma):=\frac{|\hat{R}|}{|\check{R}|} =\frac{\hat{A}_t(\theta,\sigma)}{\check{A}_t(\theta,\sigma)}.
$$

Assuming that $\theta$ is small enough, observe that one has $\check{A}_t(\theta,\sigma)>0$ (so that the construction makes sense) provided that one has:
$$
1\leq\sigma <\sigma^*_{t,\theta}:=\frac{1}{1-2t} \left(\cot 2\theta+\sqrt{1+\cot^2 2\theta}\right).
$$
On the other hand, one computes:
\begin{multline*}
[\check{A}_t(\theta,\sigma)]^2\partial_\sigma\rho_t(\theta,\sigma)
=\frac{1}{2\sigma}\sin^2 2\theta [1+(1-2t)^2]\\+\frac{1-2t}{4}\sin(4\theta)\left(1-\frac{1}{\sigma^2}\right)+\frac 12 \sin 2\theta \left[\frac{1}{\sigma^2}+(1-2t)^2\right]>0,
\end{multline*}
so that in the above range for $\sigma$, the function $\sigma \mapsto \rho_t(\theta,\sigma)$ is increasing if $\theta$ is fixed (and small enough) and tends to $+\infty$ as $\sigma$ approaches $\sigma^*_{t,\theta}$.

Finally, one gets also, for $\frac{1}{1-2t}\leq\sigma <\sigma^*_{t,\theta}$:
\begin{multline*}
[\check{A}_t(\theta,\sigma)]^2\partial_\theta\rho_t(\theta,\sigma)\\
=(1-2t)\left(\sigma+\frac{1}{\sigma}\right)+2(1-2t)\sin 2\theta +\left[(1-2t)^2\sigma-\frac{1}{\sigma}\right] \cos 2\theta>0,
\end{multline*}
so that $\theta \mapsto \rho_t(\theta,\sigma)$ is also an increasing map in the latter range for $\sigma$.

Now fix a set $\btheta\subseteq [0,\pi/6)$ of which $0$ is a limit point, fix a large number $\rho_0\geq 4\left(\frac{1-t}{1-2t}\right)^2$ and choose, for any $\theta\in\btheta$, a ratio $\frac{1}{1-2t}\leq \bsigma(\theta)< \sigma^*_{t,\theta}$ such that one has:
$$
\rho_t[\theta,\bsigma(\theta)]=\rho_0\ ;
$$
note that this is possible since we have, for all $0<\theta\leq \pi/6$:
$$
\rho\left(\theta, \frac{1}{1-2t}\right)\leq 4\left(\frac{1-t}{1-2t}\right)^2.
$$
Observe, using what has been said before, that $\bsigma:\btheta\to (1,\infty), \theta\mapsto \bsigma(\theta)$ is decreasing. Computing moreover the value of $\bsigma(\theta)$ in terms of $\theta$ (and $t$), we get easily:
$$
\bsigma(\theta)\geq \frac{(1-2t)\rho_0-\frac{1}{\cos 2\theta}}{(1-2t)^2\rho_0+1} \cot 2\theta,
$$
which yields $\bsigma(\theta)\to\infty$, $\theta\to 0$.

\subsection{Constructing a differentiation basis} Given $\btheta\subseteq [0,\pi/6)$ for which $0$ is a limit point, we constructed in the previous section a function $\bsigma: \btheta\to (1,+\infty)$. Let now $\calB^{\bsigma}$ be the basis, defined in the introduction, of all rectangles $R$ for which there exists a $\theta\in\btheta$ so that $\sigma(R)=\bsigma(\theta)$ and that the longest side of $R$ makes an angle $\theta$ with the horizontal line. Define also two basis of two-dimensional intervals by:
$$
\hat{\calB}^{\bsigma}:=\{\hat{R}:R\in\calB^{\bsigma}\}\quad\text{and}\quad\check{\calB}^{\bsigma}:=\{\check{R}:R\in\calB^{\bsigma}\}.
$$

\begin{Lemma}\label{lem.1}
Given a measurable function $f$, one has:
$$
\frac{1}{\rho_0}M_{\check{\calB}^{\bsigma}}f \leq M_{\calB^{\bsigma}}f\leq \rho_0 M_{\hat{\calB}^{\bsigma}}f.
$$
\end{Lemma}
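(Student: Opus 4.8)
The plan is to exploit the two features built into the construction of the preceding subsection: the nesting $\check{R}\subseteq R\subseteq \hat{R}$, valid for every $R\in\calB^{\bsigma}$, together with the \emph{uniform} area ratio $|\hat{R}|/|\check{R}|=\rho_t[\theta,\bsigma(\theta)]=\rho_0$, which holds for all $R\in\calB^{\bsigma}$ precisely because $\bsigma$ was chosen so that $\rho_t[\theta,\bsigma(\theta)]\equiv\rho_0$. From the nesting one immediately gets the volume comparisons $|\check{R}|\leq |R|\leq |\hat{R}|$, whence
$$
\frac{|\hat{R}|}{|R|}\leq \frac{|\hat{R}|}{|\check{R}|}=\rho_0\quad\text{and}\quad \frac{|R|}{|\check{R}|}\leq \frac{|\hat{R}|}{|\check{R}|}=\rho_0.
$$
These two inequalities are the engine of the whole argument.

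For the right-hand inequality, I would fix $x\in\R^2$ and any $R\in\calB^{\bsigma}$ with $x\in R$. Since $R\subseteq\hat{R}$ we have $x\in\hat{R}$, and, enlarging the domain of integration and factoring out the volume ratio,
$$
\frac{1}{|R|}\int_R |f|\leq \frac{|\hat{R}|}{|R|}\cdot\frac{1}{|\hat{R}|}\int_{\hat{R}}|f|\leq \rho_0\, M_{\hat{\calB}^{\bsigma}}f(x).
$$
Taking the supremum over all such $R$ yields $M_{\calB^{\bsigma}}f(x)\leq \rho_0\,M_{\hat{\calB}^{\bsigma}}f(x)$.

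For the left-hand inequality, I would instead start from an element of $\check{\calB}^{\bsigma}$: by definition any such set is $\check{R}$ for some $R\in\calB^{\bsigma}$, and if $x\in\check{R}$ then $x\in R$ as well. Enlarging the integral from $\check{R}$ to $R$ and factoring out the volume ratio gives
$$
\frac{1}{|\check{R}|}\int_{\check{R}}|f|\leq \frac{|R|}{|\check{R}|}\cdot\frac{1}{|R|}\int_R |f|\leq \rho_0\, M_{\calB^{\bsigma}}f(x),
$$
and taking the supremum over all $\check{R}\in\check{\calB}^{\bsigma}$ containing $x$ gives $M_{\check{\calB}^{\bsigma}}f(x)\leq \rho_0\,M_{\calB^{\bsigma}}f(x)$, which is the asserted lower bound after dividing by $\rho_0$.

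I do not expect a genuine obstacle here: the statement is a pointwise sandwiching of averages, and once the uniform identity $|\hat{R}|/|\check{R}|=\rho_0$ is in hand, both inequalities reduce to monotonicity of the integral under the inclusions $\check{R}\subseteq R\subseteq\hat{R}$. The only point requiring a little care is the bookkeeping with the correspondences $R\leftrightarrow\hat{R}$ and $R\leftrightarrow\check{R}$: in each direction one only needs, for a fixed set of the source basis containing $x$, the \emph{existence} of an associated rectangle of the target basis that also contains $x$ and is comparable in volume, so no injectivity of these correspondences is needed.
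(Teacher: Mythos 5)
Your proof is correct and follows essentially the same route as the paper's: both directions come from the inclusions $\check{R}\subseteq R\subseteq\hat{R}$ together with the uniform ratio $|\hat{R}|/|\check{R}|=\rho_0$, applied exactly as in the paper's own argument. Your write-up is in fact slightly more explicit about why $|\hat{R}|/|R|\leq\rho_0$ and $|R|/|\check{R}|\leq\rho_0$, which the paper leaves implicit.
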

\begin{proof}
Assume first that $R\in \calB^{\bsigma}$ is given and compute, using the previous notations:
$$
\frac{1}{|R|} \int_R |f| \leq \frac{|\hat{R}|}{|R|} \frac{1}{|\hat{R}|}\int_{\hat{R}} |f|\leq \rho_0  \frac{1}{|\hat{R}|}\int_{\hat{R}} |f|.
$$
We hence have:
$$
M_{\calB^{\bsigma}}f\leq \rho_0 M_{\hat{\calB}^{\bsigma}}.
$$
If now one fixes $Q\in\check{\calB}^{\bsigma}$, then denote by $R\in \calB^{\bsigma}$ a rectangle satisfying $\check{R}=Q$. One then writes:
$$
\frac{1}{|Q|}\int_Q |f|=\frac{1}{|\check{R}|}\int_{\check{R}}|f|\leq \rho_0 \frac{1}{|R|} \int_R |f|,
$$
and we hence get $M_{\check{\calB}^{\bsigma}}f\leq \rho_0 M_{\calB^{\bsigma}}$.
\end{proof}
\begin{Corollary}\label{cor}
The basis $\calB^{\bsigma}$ differentiates \emph{exactly} $L\log L(\R^2)$.
\end{Corollary}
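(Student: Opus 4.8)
The plan is to transfer the whole question to the two auxiliary bases $\hat\calB^{\bsigma}$ and $\check\calB^{\bsigma}$, which consist of rectangles parallel to the axes and to which the results of Stokolos apply, and then to push the conclusions back to $\calB^{\bsigma}$ by means of Lemma~\ref{lem.1}. The two estimates in that lemma say precisely that $M_{\calB^{\bsigma}}$ is comparable, with constants depending only on $\rho_0$, to both $M_{\hat\calB^{\bsigma}}$ and $M_{\check\calB^{\bsigma}}$. Hence, through the Sawyer--Stein equivalence (i)$\Leftrightarrow$(ii), the basis $\calB^{\bsigma}$ will differentiate an Orlicz space $L^\Phi(\R^2)$ exactly when the weak-type estimate~\eqref{eq.maxphi} holds for $M_{\hat\calB^{\bsigma}}$ (equivalently, fails for $M_{\check\calB^{\bsigma}}$). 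I would split the proof into an upper bound giving differentiation of $L\log L(\R^2)$ and a sharpness part ruling out anything strictly larger.

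For the upper bound, I would note that each $\hat R$ is a rectangle parallel to the axes, so $\hat\calB^{\bsigma}$ is a subfamily of the strong basis $\calS$ of all axis-parallel rectangles, whence $M_{\hat\calB^{\bsigma}}f\le M_{\calS}f$ pointwise. As recalled in the introduction, $\calS$ differentiates $L\log L(\R^2)$, i.e. $M_{\calS}$ satisfies~\eqref{eq.maxphi} with $\Phi(u)=u(1+\log^+u)$; therefore $M_{\hat\calB^{\bsigma}}$ does too, and by Lemma~\ref{lem.1} the operator $M_{\calB^{\bsigma}}\le\rho_0\,M_{\hat\calB^{\bsigma}}$ satisfies~\eqref{eq.maxphi} for the same $\Phi$ (with the constant multiplied by $\rho_0$). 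The Sawyer--Stein principle then yields that $\calB^{\bsigma}$ differentiates $L\log L(\R^2)$.

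For sharpness I must check that $\calB^{\bsigma}$ differentiates no essentially larger Orlicz space, i.e. no $L^\Phi(\R^2)$ with $\Phi(u)=o(u\log u)$. The key observation is that $\check\calB^{\bsigma}$ is a homothecy-invariant basis of axis-parallel rectangles whose eccentricities are unbounded. Indeed, writing $\sigma=\bsigma(\theta)$ and $L=\sigma\ell$, the sides of $\check R$ are $\check L=\ell[(1-2t)\sigma\cos\theta+\sin\theta]$ and $\check\ell=\ell[(1-2t)\sigma\sin\theta+\cos\theta]$, so that
$$
\frac{\check L}{\check\ell}=\frac{(1-2t)\bsigma(\theta)\cos\theta+\sin\theta}{(1-2t)\bsigma(\theta)\sin\theta+\cos\theta}.
$$
Since $\bsigma(\theta)$ is comparable to $\cot 2\theta$ as $\theta\to 0$ (so that $\theta\,\bsigma(\theta)$ stays bounded while $\bsigma(\theta)\to+\infty$), the numerator tends to $+\infty$ and the denominator stays bounded, giving $\check L/\check\ell\to+\infty$. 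I would then invoke the result of Stokolos~\cite{STOKOLOS1988} that a homothecy-invariant basis of axis-parallel rectangles with unbounded eccentricities fails to differentiate $L^\Phi(\R^2)$ whenever $\Phi(u)=o(u\log u)$, i.e. $M_{\check\calB^{\bsigma}}$ fails~\eqref{eq.maxphi} for every such $\Phi$. By the lower estimate $M_{\calB^{\bsigma}}\ge\frac1{\rho_0}M_{\check\calB^{\bsigma}}$ of Lemma~\ref{lem.1}, the same failure holds for $M_{\calB^{\bsigma}}$, and Sawyer--Stein shows $\calB^{\bsigma}$ does not differentiate any such $L^\Phi(\R^2)$; combined with the upper bound, it differentiates \emph{exactly} $L\log L(\R^2)$.

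The main obstacle is the sharpness step: the introduction states optimality of $L\log L$ only for the \emph{full} strong basis $\calS$, whereas $\check\calB^{\bsigma}$ is a genuine subfamily, so I cannot transfer non-differentiation directly. One must therefore rely on the finer form of Stokolos' theorem, asserting that unboundedness of the eccentricity set alone forces failure beyond $L\log L$, and carefully justify that the eccentricities $\check L/\check\ell$ are truly unbounded, which rests on the asymptotics $\bsigma(\theta)\asymp\cot 2\theta$ produced by the geometric construction.
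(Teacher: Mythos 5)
Your argument is correct and follows essentially the same route as the paper: check that the axis-parallel shapes $\hat{L}/\hat{\ell}$ and $\check{L}/\check{\ell}$ are unbounded as $\theta\to 0$, apply the sharp $L\log L$ statement for homothecy-invariant axis-parallel bases with unbounded shapes, and transfer both directions to $\calB^{\bsigma}$ via Lemma~\ref{lem.1} and the Sawyer--Stein principle. The only caveat is that the ``finer form of Stokolos' theorem'' you invoke for sharpness is not a black-box citation to \cite{STOKOLOS1988} but is precisely Lemma~\ref{lem.2} of the paper, whose proof (passing to dyadic parents and exhibiting arbitrarily many pairwise incomparable rectangles so as to verify Stokolos' property (S)) is where the remaining work lies.
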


To prove this corollary, we shall need the following lemma, relying mainly on \cite{STOKOLOS1988}.
\begin{Lemma}\label{lem.2}
Assume that $\calB$ is a homothecy-invariant, Buseman-Feller differentiation basis of rectangles parallel to the coordinate axes in $\R^2$. If $\sup\{\sigma(R):R\in \calB\}=+\infty$, then $\calB$ differentiates exactly $L\log L(\R^2)$.
\end{Lemma}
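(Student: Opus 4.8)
The plan is to prove the two halves of the word ``exactly'' separately, translating each into a statement about the maximal operator $M_{\calB}$ through the Sawyer--Stein principle recalled in the introduction: showing that $\calB$ \emph{differentiates} $L\log L(\R^2)$ amounts to verifying \eqref{eq.maxphi} for the Young function defining $L\log L$, while showing it differentiates \emph{no larger} Orlicz space amounts to the failure of \eqref{eq.maxphi} for every Young function growing more slowly than $t\log t$.

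For the positive half I would exploit that every $R\in\calB$ is parallel to the axes, so that $\calB$ is a subfamily of the full strong basis $\calS$ of all axis-parallel rectangles; consequently $M_{\calB}f\le M_{\calS}f$ pointwise for every measurable $f$. The Jessen--Marcinkiewicz--Zygmund inequality gives, for all $\lambda>0$,
$$
|\{M_{\calS} f>\lambda\}|\le C\int_{\R^2}\frac{|f|}{\lambda}\left(1+\log^+\frac{|f|}{\lambda}\right),
$$
which is precisely \eqref{eq.maxphi} for $\Phi(t)=t(1+\log^+t)$, the Young function associated with $L\log L$. Since $M_{\calB}\le M_{\calS}$, the same estimate holds for $M_{\calB}$, and the Sawyer--Stein principle yields that $\calB$ differentiates $L\log L(\R^2)$.

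For the negative half I would argue by contradiction against \eqref{eq.maxphi}. Fix a Young function $\Phi$ with $\Phi(t)=o(t\log t)$ as $t\to\infty$ (every Orlicz space strictly containing $L\log L$ provides such a $\Phi$); by Sawyer--Stein it suffices to show that \eqref{eq.maxphi} fails for this $\Phi$. Here the hypothesis $\sup\{\sigma(R):R\in\calB\}=+\infty$ is the engine: for each $N$ I would select rectangles $R_1,\dots,R_N\in\calB$ whose shapes increase to infinity and, using homothecy invariance to place translated and dilated copies at will, assemble a test set $E_N$ of small measure together with a much larger ``shadow'' $G_N$ on which $M_{\calB}\mathbf 1_{E_N}$ stays above a fixed threshold, with $|G_N|\gtrsim(\log N)\,|E_N|$. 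Comparing $|G_N|$ against $\int_{\R^2}\Phi(\mathbf 1_{E_N}/\lambda)$ then forces the ratio in \eqref{eq.maxphi} to blow up, which is the desired contradiction.

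The main obstacle is exactly this last construction. Unlike $\calS$, the basis $\calB$ offers only the shapes actually present in $\{\sigma(R):R\in\calB\}$ (and, since homothecy invariance scales but does not rotate by a right angle, possibly only ``wide'' or only ``tall'' copies of each), so the one-line counterexample $f=\mathbf 1_{[0,1]^2}$ available for the strong basis is not at our disposal. Producing the logarithmic loss that makes the $L\log L$ threshold sharp out of a sparse, one-sided family of eccentricities is the delicate geometric--combinatorial point, and for this I would invoke the construction of Stokolos \cite{STOKOLOS1988}, which shows that unbounded shapes alone already defeat differentiation above $L\log L(\R^2)$.
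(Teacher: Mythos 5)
Your positive half is fine, and it is essentially what the paper uses (through Stokolos): since every element of $\calB$ is an axis-parallel rectangle, $M_{\calB}\le M_{\calS}$ pointwise for the strong maximal operator $M_{\calS}$, the Jessen--Marcinkiewicz--Zygmund estimate gives \eqref{eq.maxphi} for $\Phi(t)=t(1+\log^+t)$, and the Sawyer--Stein principle converts this into differentiation of $L\log L(\R^2)$.

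The negative half, however, has a genuine gap. You close by invoking Stokolos \cite{STOKOLOS1988} as if his result said directly that ``unbounded shapes alone already defeat differentiation above $L\log L(\R^2)$''; but that is essentially the statement of the lemma you are asked to prove, not the statement of Stokolos's theorem. What Stokolos proves is a criterion: a translation-invariant basis of axis-parallel rectangles differentiates exactly $L\log L(\R^2)$ provided it enjoys property (S), i.e.\ for every $n$ it contains $n$ pairwise incomparable rectangles up to translation. The entire content of the lemma is the reduction of the hypothesis ``$\sup\{\sigma(R):R\in\calB\}=+\infty$ for a homothecy-invariant basis'' to that criterion, and this is exactly the step you leave blank. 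Concretely, the paper (i) splits $\calB=\calB_x\cup\calB_y$ according to the direction of the longest side and reduces, by symmetry, to the case where all longest sides are horizontal; (ii) replaces $\calB$ by the family $\calB^*$ of dyadic parents and verifies that $\calB^*$ is translation-invariant and invariant under homothecies of dyadic ratio, and that $\sigma(R^*)\ge\frac12\sigma(R)$, so shapes remain unbounded; (iii) selects shapes $\sigma_k\to\infty$ with $(\sigma_k2^{-k})$ increasing and forms $Q_k=2^k[0,\sigma_0]\times[0,\sigma_0/\sigma_k]\in\calB^*$, whose horizontal sides increase with $k$ while their vertical sides decrease, so that $\{Q_0,\dots,Q_n\}$ are $n+1$ pairwise incomparable elements. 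Only after these three steps does Stokolos's theorem apply. Your sketched $E_N$/$G_N$ construction with a $\log N$ gain is morally what lies behind property (S), but as written it is neither carried out nor reduced to a quotable statement; note also that the difficulty you yourself flag (only a sparse, possibly one-sided family of eccentricities is available) is resolved precisely by the homothecy invariance and the dyadic-parent normalization above, not by anything in your text. As it stands, the sharpness claim is not established.
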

\begin{proof}[Proof of the lemma]
To prove this lemma, observe that one can, without loss of generality, assume that all rectangles in $\calB$ have their longest side parallel to the $x$-axis. It is clear indeed, that one can write $\calB=\calB_x\cup\calB_y$, where $\calB_x$ and $\calB_y$ are the collection of elements of $\calB$ whose longest side lie in the $x$- and $y$- direction, respectively. One has then $\sup\{\sigma(R):R\in\calB_i\}=+\infty$ for at least one $i\in\{x,y\}$. It then follows from symmetrization if necessary, than one cas always assume $i=x$. Now just note that if $\calB_x$ differentiates exactly $L\log L(\R^2)$, then the same is true for $\calB$.

As in \cite{STOKOLOS1988}, denote by $\calB^*$ the set of all dyadic parents of elements in $\calB$ (the \emph{dyadic parent} $R^*$ of a rectangle $R$ parallel to the axes, being the rectangle with dyadic side-lenths containing $R$, concentric with it and having the smallest possible area). Since it is easy to check that one has $\sigma(R^*)\geq\frac 12 \sigma(R)$ for any rectangle $R$ parallel to the axes, is now clear that one has $\sup\{\sigma(R^*):R\in\calB\}=+\infty$. It is also easy to see that $\calB^*$ is translation invariant. Finally, observe that $\calB^*$ is also invariant under homothecies with dyadic ratio. Indeed, fix $R\in\calB(0)$ and $k\in\Z$ and let's see that $2^k R^*\in\calB^*$. We need to establish that $2^k R^*=Q^*$ for some $Q\in\calB$. Yet if one denotes by $x_0$ and $y_0$ (resp. $x_0^*$ and $y_0^*$) the lenths of the $x$- and $y$-sides of $R$ (resp. $R^*$) respectively, we get by definition of the dyadic parent $\frac 12 x_0^*<x_0\leq x_0^*$ and $\frac 12 y_0^*<y_0\leq y_0^*$. This also yields  $\frac 12 2^kx_0^*<2^kx_0\leq 2^kx_0^*$ and $\frac 12 2^ky_0^*<2^ky_0\leq 2^ky_0^*$, meaning that $(2^k R)^*$ has side-lengths $2^kx_0^*$ and $2^ky_0^*$. Denoting by $Q$ the rectangle parallel to the axes with same center as $(2^kR)^*$ and side-lengths $2^kx_0$ and $2^ky_0$ respectively, it is then clear that $Q$ is homothetic to $R$ (and so that one has $Q\in\calB$) while one has $Q^*=2^kR^*$. Hence $2^kR^*\in\calB^*$, what we wanted to prove.

Now take a strictly increasing sequence $(\sigma_k)\subseteq \{\sigma(R^*):R\in\calB\}$ verifying $\sigma_k\to+\infty$ and for which $(\sigma_k 2^{-k})$ is a strictly increasing sequence. 

Let $Q_k:=2^k [0,\sigma_0]\times[0,\sigma_0/\sigma_k]$ for all $k\in\N$. Observing first that $\sigma([0,\sigma_0]\times[0,\sigma_0/\sigma_k])=\sigma_k$, translation-invariance of $\calB^*$ ensures that one has $[0,\sigma_0]\times[0,\sigma_0/\sigma_k]\in\calB^*$. By the preceding comments, it hence follows that one has $Q_k\in\calB^*$.

Since now, for all $n\in\N$, the family $\calQ_n:=\{Q_k:0\leq k\leq n\}$ is a finite subset of $\calB^*$ with $n+1$ pairwise incomparable elements up to translation (writing $Q_k:= [0,2^k\sigma_0]\times[0,2^k\sigma_0/\sigma_k]$, it is clear indeed that the $x$-side length of $Q_k$ increases with $k$, while its $y$-side length decreases), it follows that $\calB$ enjoys property (S) of \cite{STOKOLOS1988}, and hence differentiates exactly $L\log L(\R^2)$.
\end{proof}

\begin{proof}[Proof of Corollary~\ref{cor}]
Given $R$ a rectangle whose longest side makes an angle $\theta\in\btheta$ with the horizontal axis, and satisfying $\sigma(R)=\bsigma(\theta)$, we compute using the previous results:
$$
\frac{\check{L}}{\check{\ell}}=\frac{(1-2t)\cos\theta + \frac{\sin\theta}{\bsigma(\theta)}}{(1-2t)\sin\theta +\frac{\cos\theta}{\bsigma(\theta)}},
$$
and:
$$
\frac{\hat{L}}{\hat{\ell}}=\frac{\cos\theta+\frac{\sin\theta}{\bsigma(\theta)}}{\sin\theta+\frac{\cos\theta}{\bsigma(\theta)}}.
$$
Since both of those ratios tend to $+\infty$ when $\theta$ approaches $0$, it follows from Lemma~\ref{lem.2} that $\check{\calB}^{\bsigma}$ and $\hat{\calB}^{\bsigma}$ differentiate exactly $L\log L(\R^2)$. Now use Lemma~\ref{lem.1} to infer, by the Sawyer-Stein principle (see \eqref{eq.maxphi} in the introduction), that $\calB^{\bsigma}$ differentiates exactly $L\log L(\R^2)$.
\end{proof}
\begin{Remark}
A simple computations shows that the shape-function $\bsigma:\btheta\to (1,+\infty)$ constructed before in such a way that $\calB^{\bsigma}$ differentiates $L\log L(\R^2)$, has a linear growth with respect to $1/\theta$ (meaning that there are constants $0<c_1<c_2$ for which one has $c_1/\theta \leq\bsigma(\theta )\leq c_2/\theta$ for all $\theta\in\btheta$ small enough).
\end{Remark}
\section{Shape-functions constant on blocks}

\begin{figure}[h!]
\begin{center}
\begin{tikzpicture}[line cap=round,line join=round,>=triangle 45,x=5.0cm,y=5.0cm]
\draw[->,color=black] (-1.1,0.) -- (1.1,0.);
\foreach \x in {-1.,-0.8,-0.6,-0.4,-0.2,0.2,0.4,0.6,0.8,1.}
\draw[shift={(\x,0)},color=black] (0pt,-2pt);
\draw[->,color=black] (0.,-1.1) -- (0.,1.1);
\foreach \y in {-1.,-0.8,-0.6,-0.4,-0.2,0.2,0.4,0.6,0.8,1.}
\draw[shift={(0,\y)},color=black] (-2pt,0pt);
\clip(-1.1,-1.1) rectangle (1.1,1.1);
\fill[line width=2.pt,color=ttttff,fill=ttttff,fill opacity=0.10000000149011612] (1.,0.11862396204033215) -- (1.,-0.11862396204033215) -- (-1.,-0.11862396204033215) -- (-1.,0.11862396204033215) -- cycle;
\fill[line width=2.pt,color=ccqqqq,fill=ccqqqq,fill opacity=0.10000000149011612] (0.10326251653234646,1.001702798762963) -- (0.3347508576537365,0.9497439168903552) -- (-0.10326251653234646,-1.001702798762963) -- (-0.3347508576537365,-0.9497439168903552) -- cycle;
\fill[line width=2.pt,color=qqzzqq,fill=qqzzqq,fill opacity=0.10000000149011612] (0.656575079380946,0.7635317999311166) -- (0.8169859866059159,0.5887321479753798) -- (-0.656575079380946,-0.7635317999311166) -- (-0.8169859866059159,-0.5887321479753798) -- cycle;
\fill[line width=2.pt,fill=black,fill opacity=1.0] (0.,0.) -- (0.12157540463575348,0.) -- (0.11862396204033215,0.11862396204033215) -- (0.,0.11862396204033215) -- cycle;
\draw [line width=2.pt,color=ttttff] (1.,0.11862396204033215)-- (1.,-0.11862396204033215);
\draw [line width=2.pt,color=ttttff] (1.,-0.11862396204033215)-- (-1.,-0.11862396204033215);
\draw [line width=2.pt,color=ttttff] (-1.,-0.11862396204033215)-- (-1.,0.11862396204033215);
\draw [line width=2.pt,color=ttttff] (-1.,0.11862396204033215)-- (1.,0.11862396204033215);
\draw [line width=2.pt,color=ccqqqq] (0.10326251653234646,1.001702798762963)-- (0.3347508576537365,0.9497439168903552);
\draw [line width=2.pt,color=ccqqqq] (0.3347508576537365,0.9497439168903552)-- (-0.10326251653234646,-1.001702798762963);
\draw [line width=2.pt,color=ccqqqq] (-0.10326251653234646,-1.001702798762963)-- (-0.3347508576537365,-0.9497439168903552);
\draw [line width=2.pt,color=ccqqqq] (-0.3347508576537365,-0.9497439168903552)-- (0.10326251653234646,1.001702798762963);
\draw [line width=2.pt,color=qqzzqq] (0.656575079380946,0.7635317999311166)-- (0.8169859866059159,0.5887321479753798);
\draw [line width=2.pt,color=qqzzqq] (0.8169859866059159,0.5887321479753798)-- (-0.656575079380946,-0.7635317999311166);
\draw [line width=2.pt,color=qqzzqq] (-0.656575079380946,-0.7635317999311166)-- (-0.8169859866059159,-0.5887321479753798);
\draw [line width=2.pt,color=qqzzqq] (-0.8169859866059159,-0.5887321479753798)-- (0.656575079380946,0.7635317999311166);
\draw [line width=2.pt] (0.,0.)-- (0.12157540463575348,0.);
\draw [line width=2.pt] (0.12157540463575348,0.)-- (0.11862396204033215,0.11862396204033215);
\draw [line width=2.pt] (0.11862396204033215,0.11862396204033215)-- (0.,0.11862396204033215);
\draw [line width=2.pt] (0.,0.11862396204033215)-- (0.,0.);
\begin{scriptsize}
\draw[color=ttttff] (0.36275442120732,0.045973831649209596) node {$R_k^1$};
\draw[color=ccqqqq] (0.07976432054964831,0.3401345941749468) node {$R_k^{N_k}$};
\draw[color=qqzzqq] (0.28828334208688006,0.23959863736235304) node {$R_k^i$};
\draw[color=black] (-0.039600974394879572,0.031079615825121636) node {$\Theta$};
\end{scriptsize}
\end{tikzpicture}\caption{The rectangles $R_k^i$, $1\leq i\leq N_k$ (after rotation of angle $-\theta_{k+1}$ around the origin) in the proof of Proposition~\ref{prop.orl}}\label{fig.2}
\end{center}
\end{figure}
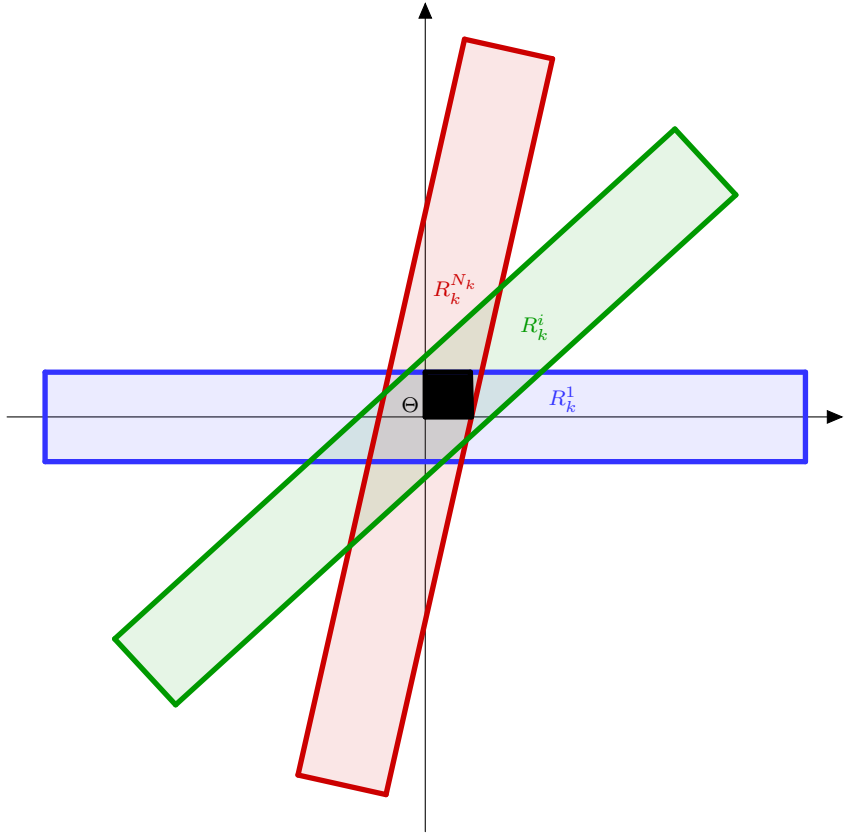

We now examine the case where $$\btheta=\left\{\theta_{k+1}+ \frac{i-1}{N_k} (\theta_k-\theta_{k+1}):k\in\N, 1\leq i\leq N_k\right\}
$$ is associated to a sequence $(\theta_k)_{k\in\N}\subseteq (0,\pi/4)$ decreasing to $0$ and to a sequence of integers $(N_k)_{k\in\N}\subseteq\N^*$ by inserting uniformly $N_k$ angles in between $\theta_{k+1}$ and $\theta_k$, and where $\bsigma:\btheta\to [1,+\infty)$ is constant on each of those ``blocks'', meaning that for each $k\in\N$, there exists a real number $\sigma_k\geq 1$ such that one has $\bsigma(\theta_{k+1}+\frac{i-1}{N_k} (\theta_k-\theta_{k+1}))=\sigma_k$ for all $1\leq i\leq N_k$. We remain in this setting until the end of this section, unless otherwise mentioned.

Recall that, given sequences $(a_k)\subseteq\R_+$ and $(b_k)\subseteq\R_+^*$, one writes $a_k=o(b_k)$ (resp. $a_k=O(b_k)$) if the quotient $\frac{a_k}{b_k}$ tends to $0$ (resp. remains bounded) as $k$ grows to $\infty$.
\begin{Proposition}\label{prop.orl}
Assume $\btheta$ and $\bsigma$ are as before and that one has moreover, for each $k\in\N$:
\begin{equation}\label{eq.angle}
\sin\left(\frac{\theta_k-\theta_{k+1}}{N_k}\right)\geq\frac{4}{\sigma_k}.
\end{equation}
Let $\Phi:[0,+\infty)\to [0,+\infty)$ be a Young function.
If the homothecy-invariant basis $\calB^{\bsigma}$ associated to $\sigma$ differentiates the Orlicz space $L^\Phi(\R^2)$, then one has $N_k\sigma_k=O[\Phi(\sigma_k)]$ as $k\to\infty$.
\end{Proposition}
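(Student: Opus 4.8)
The plan is to invoke the Sawyer--Stein principle (the equivalence (i)$\iff$(ii) recalled in the introduction, i.e.\ estimate \eqref{eq.maxphi}) in its contrapositive form. Assuming $\calB^{\bsigma}$ differentiates $L^\Phi(\R^2)$, we obtain a constant $C$ for which \eqref{eq.maxphi} holds for every measurable $f$ and every $\lambda>0$; I shall then feed into it, for each large $k$, a cheap test function $f_k$ whose maximal function exceeds $\lambda_k$ on a set of measure $\gtrsim N_k\sigma_k$, while $\int\Phi(|f_k|/\lambda_k)\lesssim\Phi(\sigma_k)$. Comparing the two sides of \eqref{eq.maxphi} will yield $N_k\sigma_k\leq C'\Phi(\sigma_k)$, which is the claim.

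Concretely, fix $k$ large (so that $\Delta_k:=\theta_k-\theta_{k+1}\leq\theta_k$ is small, which is harmless since the conclusion is asymptotic) and, after rotating by $-\theta_{k+1}$ as in Figure~\ref{fig.2}, consider the $N_k$ rectangles $R_k^1,\dots,R_k^{N_k}$ of width $1$ and length $\sigma_k$ whose common short edge is centred at the origin and whose long axes point in the directions $u_i=(\cos\phi_i,\sin\phi_i)$ with $\phi_i=\frac{i-1}{N_k}\Delta_k$. Each $R_k^i$ has shape $\sigma_k$ and an admissible angle, hence $R_k^i\in\calB^{\bsigma}$ by translation- and homothecy-invariance. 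I then take $f_k=\mathbf 1_B$, where $B$ is a fixed bump of area $|B|=2$ placed inside the common intersection $\bigcap_i R_k^i$, and set $\lambda_k:=1/\sigma_k$.

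The heart of the matter is two elementary geometric facts. First, the separation hypothesis \eqref{eq.angle} forces the outer halves $T_k^i:=\{su_i+wu_i^\perp:\sigma_k/2\leq s\leq\sigma_k,\ |w|\leq 1/2\}$ to be pairwise disjoint: two such tails lie along lines through the origin making an angle $m\delta_k$ ($m\geq1$, $\delta_k=\Delta_k/N_k$), so at arclength $s\geq\sigma_k/2$ their transverse separation is at least $\frac{\sigma_k}{2}\sin\delta_k\geq 2$, exceeding the combined half-widths $1$. Hence $\bigl|\bigcup_i R_k^i\bigr|\geq\sum_i|T_k^i|=\tfrac12 N_k\sigma_k$. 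Second, for $k$ large the common intersection $\bigcap_i R_k^i$ has area $\gtrsim 1/\Delta_k\gg 2$: along the mid-direction it reaches $r\approx\frac{1}{2\sin(\Delta_k/2)}\approx 1/\Delta_k$ before a transverse constraint cuts it off, and \eqref{eq.angle} guarantees this happens before the length $\sigma_k$ does (indeed $1/\Delta_k\leq 1/\delta_k\leq 1/\sin\delta_k\leq\sigma_k/4$), so the bump $B$ with $|B|=2$ fits inside. Granting this, for every $x\in R_k^i$ the single rectangle $R_k^i$ gives $M_{\calB^{\bsigma}}f_k(x)\geq\frac{|B\cap R_k^i|}{|R_k^i|}=\frac{2}{\sigma_k}>\lambda_k$, so that $\{M_{\calB^{\bsigma}}f_k>\lambda_k\}\supseteq\bigcup_i R_k^i$.

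Finally I substitute into \eqref{eq.maxphi}: the left-hand side is $\geq\tfrac12 N_k\sigma_k$, while the right-hand side equals $C\int_{\R^2}\Phi(\mathbf 1_B\,\sigma_k)=C\,|B|\,\Phi(\sigma_k)=2C\,\Phi(\sigma_k)$, using $\Phi(0)=0$ and $1/\lambda_k=\sigma_k$. Thus $N_k\sigma_k\leq 4C\,\Phi(\sigma_k)$ for all large $k$, i.e.\ $N_k\sigma_k=O[\Phi(\sigma_k)]$. The main obstacle is the geometric bookkeeping of the previous paragraph, and especially the point---crucial for a general, possibly fast-growing, Young function---that the bump be seated in the common intersection with a fixed mass, so that the argument of $\Phi$ is exactly $\sigma_k$ rather than a dilate $C\sigma_k$ (for which $\Phi(C\sigma_k)$ could be incomparable to $\Phi(\sigma_k)$); this sharpness is precisely what is enabled by $\Delta_k\to 0$ together with \eqref{eq.angle}.
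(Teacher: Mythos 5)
Your proof is correct and follows essentially the same strategy as the paper's: both test the weak-type inequality \eqref{eq.maxphi} (via the Sawyer--Stein principle) against a fan of $N_k$ rectangles of shape $\sigma_k$ in the $k$-th block of directions, use \eqref{eq.angle} to show the union of the fan has measure $\gtrsim N_k\sigma_k$, and place a test function of bounded mass near the common point to bound the right-hand side by $\lesssim\Phi(\sigma_k)$. The only (minor) difference is your normalization $f_k=\mathbf 1_B$, $\lambda_k=1/\sigma_k$, which keeps the argument of $\Phi$ equal to $\sigma_k$ exactly; this handles the constants a bit more carefully than the paper's choice $f=\sigma_k\chi_{\Theta_k}$, $\lambda=\tfrac14$, which strictly speaking produces $\Phi(4\sigma_k)$ and relies on an implicit doubling of $\Phi$.
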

\begin{proof}
We follow a similar strategy to the one developed by the second author in \cite{MOONENS2016}. To this purpose, define for each $k\in\N$ a two-dimensional interval $Q_k:=[-\sigma_k,\sigma_k]\times [-1,1]$.
Define, for $1\leq i\leq N_k$, $R_k^i$ to be the rectangle obtained from $Q_k$ by rotating it around the origin by an angle $\theta_{k+1}+\frac{i-1}{N_k}(\theta_k-\theta_{k+1})$.
It is not hard to observe that condition \eqref{eq.angle} ensures that one has:
$$
\left|\bigcup_{i=1}^{N_k} R_k^i\right|\geq\frac 12 N_k|R_k|,
$$
by noting for example that \eqref{eq.angle} implies that (at least) half (in area) of the rectangle $R_k^i$ has no overlap with $R_k^j$ for $j\neq i$. Define then $Y_k:=\bigcup_{i=1}^{N_k} R_k^i$ and let $\Theta_k$ be the set obtained by rotating $[0,1]^2$ around the origin by an angle $\theta_{k+1}$ (see Figure~\ref{fig.2}).

Given $k\in\N$ and $x\in Y_k$, there is an $1\leq i\leq N_k$ such that one has $x\in R_k^i$. But since one has $R_k^i\in\calB^{\bsigma}$, this yields:
$$
M_{\calB^{\bsigma}} (\sigma_k \chi_{\Theta_k})\geq \frac{1}{|R_k^i|}\int_{R_k^i} \sigma_k \chi_{\Theta_k}=\frac{\sigma_k}{|R_k^i|}=\frac 14 .
$$
We hence have $Y_k\subseteq\{x\in\R^2: M_{\calB^{\bsigma}} (\sigma_k \chi_{\Theta_k})\geq \frac 14 \}$. Yet if $\calB^{\bsigma}$ did differentiate $L^\Phi(\R^2)$, it would follow from \eqref{eq.maxphi} that one would have:
\begin{multline*}
\frac 12 N_k 4\sigma_k=\frac 12 N_k |R_k|\leq |Y_k|\\\
\leq |\{x\in\R^2: M_{\calB^{\bsigma}} (\sigma_k \chi_{\Theta_k})\geq \frac 14 \}|\leq 4C\int_{\R^2} \Phi(\sigma_k\chi_{\Theta_k})=4C\Phi(\sigma_k),
\end{multline*}
from which the announced statement follows.
\end{proof}

The previous result has amusing consequences when one considers sets of the previous form associated to the sequence defined by $\theta_k:=2^{-k}$ (recall from the introduction that for this sequence $\btheta_0:=\{2^{-k}:k\in\N\}$ it is known that $\calB_{\btheta_0}$ differentiates $L^p(\R^2)$ for all $p>1$, with no restriction on the shapes of the rectangles, while it is unknown if it differentiates $L\log L(\R^2)$). The corollary below shows that adding angles in between two terms of the ``model'' geometric sequence, while restricting the shape in those ``blocks'', may fail to differentiate Orlicz spaces lying in between $L^1(\R^2)$ and $L^p(\R^2)$ for all $1<p<\infty$~---~of course, the shape-function therefore has to increase quite fast.
\begin{Corollary}
Assume that $\theta_k:=2^{-k}$ for all $k\in\N$ and let $\btheta$ be associated to $(\theta_k)_{k\in\N}$ and to $(N_k)_{k\in\N}\subseteq\N^*$ as before. Define also, for $k\in\N$, $\sigma_k:=4/\sin(2^{-k-1}/N_k)$ and let $\bsigma$ be the associated shape-function.
If, for some $\alpha>0$, one has $k^\alpha=o(N_k)$ when $k\to\infty$, then $\calB^{\bsigma}$ fails to differentiate $L\log^\alpha L(\R^2)$.
\end{Corollary}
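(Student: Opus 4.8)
The plan is to deduce the corollary directly from Proposition~\ref{prop.orl}, by turning the necessary condition $N_k\sigma_k=O[\Phi(\sigma_k)]$ into a polynomial upper bound on $N_k$ that contradicts the hypothesis $k^\alpha=o(N_k)$. First I would check that the hypotheses of Proposition~\ref{prop.orl} are satisfied. Since $\theta_k-\theta_{k+1}=2^{-k}-2^{-k-1}=2^{-k-1}$, the prescribed choice $\sigma_k:=4/\sin(2^{-k-1}/N_k)$ makes condition~\eqref{eq.angle} hold with equality, so Proposition~\ref{prop.orl} applies without further work. Note also that $\sigma_k\geq 4>1$, so that $\bsigma$ is a legitimate shape-function.

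Next I would record the size of $\sigma_k$. Because $2^{-k-1}/N_k\leq 1/2<\pi/2$, the elementary inequalities $\tfrac12 x\leq\sin x\leq x$ on $[0,\pi/2]$ give the two-sided estimate $N_k2^{k+3}\leq\sigma_k\leq N_k2^{k+4}$. In particular $\sigma_k\to+\infty$ and, taking logarithms, $\log\sigma_k=k\log 2+\log N_k+O(1)$. Now I would argue by contradiction: suppose $\calB^{\bsigma}$ differentiates $L\log^\alpha L(\R^2)$, the Orlicz space attached to the Young function $\Phi$ with $\Phi(t)=t(\log(e+t))^\alpha$ (or any Young function equivalent to it). Since $\sigma_k\to+\infty$, one has $\Phi(\sigma_k)\leq C\sigma_k(\log\sigma_k)^\alpha$ for large $k$, so Proposition~\ref{prop.orl} yields $N_k\sigma_k=O[\Phi(\sigma_k)]$ and hence $N_k=O[(\log\sigma_k)^\alpha]$.

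The main step, and the only delicate point, is to convert $N_k=O[(\log\sigma_k)^\alpha]$ into $N_k=O(k^\alpha)$ using the asymptotics of $\log\sigma_k$. I would first rule out that $\log N_k>k$ holds along a subsequence $(k_j)$: on such a subsequence $\log N_{k_j}>k_j$ forces $N_{k_j}\geq e^{k_j}\to+\infty$, while $\log\sigma_{k_j}=k_j\log 2+\log N_{k_j}+O(1)\leq C\log N_{k_j}$, so that $N_{k_j}\leq C'(\log\sigma_{k_j})^\alpha\leq C''(\log N_{k_j})^\alpha$; as $t\mapsto t/(\log t)^\alpha\to+\infty$ this keeps $N_{k_j}$ bounded, a contradiction. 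Hence $\log N_k\leq k$ for all large $k$, whence $\log\sigma_k=O(k)$ and finally $N_k\leq C(\log\sigma_k)^\alpha=O(k^\alpha)$.

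This last bound contradicts the standing assumption $k^\alpha=o(N_k)$ (equivalently $N_k/k^\alpha\to+\infty$), so the supposition that $\calB^{\bsigma}$ differentiates $L\log^\alpha L(\R^2)$ is untenable, and the corollary follows. Everything outside the third paragraph is a routine substitution into Proposition~\ref{prop.orl}; the elementary analysis excluding the regime $\log N_k>k$ is where the argument actually does its work.
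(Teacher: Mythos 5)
Your proposal is correct and follows exactly the route the paper intends: the paper's proof is the one-line remark that the corollary follows by contradiction from Proposition~\ref{prop.orl} applied to $\Phi(t)=t(1+\log_+^\alpha t)$, and your argument simply fills in the (correct) elementary asymptotics --- verification of \eqref{eq.angle} with equality, the two-sided bound $N_k2^{k+3}\leq\sigma_k\leq N_k2^{k+4}$, and the conversion of $N_k=O[(\log\sigma_k)^\alpha]$ into $N_k=O(k^\alpha)$ --- that the authors leave implicit.
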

\begin{proof}
This corollary follows in a straightforward way by contradiction from the previous proposition applied to $\Phi(t):=t(1+\log_+^\alpha t)$.
\end{proof}
\begin{Remark}
One can of course formulate a similar corollary, starting from a lacunary sequence $(\theta_k)_{k\in\N}$ as in \cite{MOONENS2016} instead of the ``model'' sequence $(2^{-k})_{k\in\N}$.
\end{Remark}

\end{document}